\documentclass[a4paper,twoside,openbib,11pt]{article}

\flushbottom \setlength{\textwidth}{150mm}
\setlength{\textheight}{225mm} \setlength{\topmargin}{-5mm}
\setlength{\oddsidemargin}{5mm} \setlength{\evensidemargin}{5mm}
\linespread{1.25}

\usepackage{amsfonts}
\usepackage{amsmath}
\usepackage{amsthm}
\usepackage{amssymb}
\usepackage[all]{xy}
\usepackage[latin1]{inputenc}
\usepackage{graphicx}
\usepackage{color}
\usepackage{yfonts}

\input xy
\xyoption{all}

\newtheorem{theorem}{Theorem}[section]
\newtheorem{corollary}[theorem]{Corollary}

\newtheorem{proposition}[theorem]{Proposition}
\theoremstyle{definition}
\newtheorem{definition}[theorem]{Definition}

\newtheorem{example}[theorem]{Example}


\newcommand{\Ker}{\operatorname{ker}}

\newcommand\coker{\operatorname{coker}}
\newcommand\Img{\operatorname{\rm Im\,}}

\newcommand\iso{\kern.35em{\raise3pt\hbox{$\sim$}\kern-1.1em\to}\kern.3em}

\pagestyle{myheadings}

\markboth{Carriegos, DeCastro \& Mu\~{n}oz-Casta\~{n}eda}{Kernel of a pair of linear maps}

\title{A note on the kernel of a pair of linear maps
\thanks{Partially supported by INCIBE. Ministerio de Industria, Spain.
 }
\author{Miguel V. Carriegos\footnote{RIASC, Universidad de Le\'on, SPAIN, mail to: {\texttt miguel.carriegos@unileon.es}}\qquad Noem\'{i} DeCastro\footnote{Departamento de Matem\'aticas, Universidad de Le\'on, SPAIN, mail to: {\texttt ncasg@unileon.es}} \qquad Ángel Luis Muñoz Castañeda\footnote{Institut f\"ur Mathematik, Freie Universit\"at, Berlin, GERMANY, mail to: {\texttt angel@math.fu-berlin.de}}}
}
\begin{document}

\maketitle
\begin{abstract}
The kernel of a pair of linear maps is studied in the framework of commutative ring theory with application to behavioral perspective of dynamical systems.\end{abstract}

{\textsl Keywords:} pair of linear maps, pencil of matrices, convolution code, linear system, hereditary ring.

{\textsl 2010 MSC: } 93B10, 15A21, 13C10.

\section{Introduction}
This paper is devoted to the study of kernels of pairs of linear maps. This notion extend some tools used in systems theory, convolutional codes and Boolean networks \cite{Rosenthal}, \cite{Puzzles}.

In fact if $\mathcal{A}=R^{\mathbb{Z}}$ is the $R$-algebra ($R$ a ring) of sequences of elements of $R$; $A\in R^{n\times n}$, $B\in R^{n\times m}$ are matrices and $\sigma$ is the shift operator $\sigma(x(t))=x(t+1)$. Then set 
$$\Ker(\sigma\mathbf{I}-A,B)=\{(x(t),u(t))\in\mathcal{A}^n\times\mathcal{A}^m\mid \sigma(x(t))=x(t+1)=A x(t)+B u(t)\}$$
collects the trajectories of linear system. On the other hand, it has also been defined
$$\Ker(\sigma\mathbf{I}-A\mid B)=\{u(t)\in\mathcal{A}^m\mid \exists x(t)\in \mathcal{A}^n: \sigma(x(t))=x(t+1)=A x(t)+B u(t)\}
$$
which is central in the definition of convolutional codes because it collects the codewords of convolutional code defined by linear system $(A,B)$. In order to attack this problem we are studying kernels of pairs of matrices with entries in polynomial rings $R[z]$.

Now all along this paper $R$ will denote a commutative ring with identity. Usually $R$ will be a $\mathbb{F}$-algebra or even a field. We develop our results in a general framework and claim additional properties or structure when necessary.

\begin{definition}
Let $N$ and $M_i$, $i=1,2$ be $R$-modules and $f_i:M_i\rightarrow N$ be $R$-linear maps. We define
$$
\Ker(f_1\mid f_2)=\{m_2\in M_2\mid \exists m_1\in M_1 : f_1(m_1)+f_2(m_2)=0\}
$$
while we set
$$
\Ker(f_1,f_2)=\Ker(f_1\oplus f_2:M_1\oplus M_2\rightarrow N)=\{(m_1,m_2)\in M_1\oplus M_2:f(m_1)+f_2(m_2)=0\}
$$
as usual.
\end{definition}

The paper is organized as follows: Section $2$ deals with main properties of $\Ker(f_1\mid f_2)$ which follow from the fact that $\Ker(f_1\mid f_2)$ is the kernel of certain linear map and it is also the cockernel of another linear map. We also point out several properties of $\Ker(f_1\mid f_2)$ when maps $f_i$ have nice properties.

Section $3$ is devoted to give a explicit factorization of $\Ker(f_1\mid f_2)$ in terms of usual kernels $\Ker(f_1,f_2)$ and $\Ker(f_1)$ when involved $R$-modules are hereditary or in particular when commutative ring $R$ is hereditary (that is when submodules of projective $R$-modules are again projective). Note that the class of hereditary rings contains several interesting classes of commutative rings like for instance fields, principal ideal domains, Dedekind domains, von Neumann regular rings and Boolean rings.

Next we study scalar extensions $R\rightarrow S$ in section $4$. If $R$-algebra $S$ happens to be $R$-flat then scalar extension of $\Ker(f_1\mid f_2)$ is given by the kernel of pair of extended maps $\Ker(f_1\otimes\mathbf{I}_S\mid f_2\otimes\mathbf{I}_S)$. This fact is used extensively in section $5$ to develop the case of product rings $R\cong R_1\times\cdots\times R_s$. We conclude by giving some results related to the kernel of pairs of polynomial matrices which are applicable both in behavioral linear systems and convolutional codes.


\section{The kernel of a pair. Definition and properties}
Let $R$ be a commutative ring;  $M_1,M_2,N$ are $R$-modules and let $f_ j :M_j\rightarrow N$ be $R$-linear maps. 

\begin{definition}\label{DefKerKer}
We denote by $\Ker(f_1\mid f_2)$ the subset of all $m_2\in M_2$ such that there exists $m_1\in M_1$ with the property $f_1(m_1)+f_2(m_2)=0$; thats to say
$$
\Ker(f_1\mid f_2)=\left\{m_2\in M_2 \mid \exists m_1\in M_1: f_1(m_1)+f_2(m_2)=0\right\}.
$$
\end{definition}
This is a generalization of so-called kernel of a pair of morphisms as given in \cite{Rosenthal}, \cite{Puzzles}. Note that it is quite clear that $\Ker(f_1\mid f_2)$ is a $R$-submodule of $M_2$. It is also straightforward that

\begin{proposition}\label{f2-1Imf1}
$\Ker(f_1\mid f_2)=f_2^{-1}(\Img(f_1))$.
\end{proposition}
\begin{proof}
If $m_2\in\Ker(f_1\mid f_2)$ then $f_1(m_1)+f_2(m_2)=0$ for some $m_1$; that is, $f_2(m_2)\in\Img(f_1)$ or $m_2\in f_2^{-1}(\Img(f_1))$
\end{proof}

We are also interested in presenting $\Ker(f_1\mid f_2)$  as the kernel of a $R$-linear map and as the cokernel of another $R$-linear map. Let's denote by $p_i:N\rightarrow N/\Img(f_i)$ the natural quotient map of identity sending $n\mapsto n+\Img(f_i)$. Then it is straightforward that

\begin{proposition}
$\Ker(f_1\mid f_2)=\Ker(p_1\circ f_2:M_2\rightarrow N/\Img(f_1))$
\end{proposition}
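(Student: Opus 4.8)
The plan is to reduce the claim to Proposition~\ref{f2-1Imf1}, which already identifies $\Ker(f_1\mid f_2)$ with $f_2^{-1}(\Img(f_1))$. The only additional ingredient needed is the elementary fact that the kernel of a quotient map coincides with the submodule being quotiented out: for the natural projection $p_1:N\rightarrow N/\Img(f_1)$ one has $\Ker(p_1)=\Img(f_1)$, since $p_1(n)=n+\Img(f_1)$ equals the zero class exactly when $n\in\Img(f_1)$.

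With this in hand I would argue by a short chain of equivalences on membership. First I would unwind the definition of the kernel of the composite: an element $m_2\in M_2$ lies in $\Ker(p_1\circ f_2)$ precisely when $(p_1\circ f_2)(m_2)=p_1(f_2(m_2))=0$ in $N/\Img(f_1)$. By the observation above, this holds if and only if $f_2(m_2)\in\Ker(p_1)=\Img(f_1)$, i.e.\ if and only if $m_2\in f_2^{-1}(\Img(f_1))$. Since these steps are reversible, we obtain the set equality $\Ker(p_1\circ f_2)=f_2^{-1}(\Img(f_1))$. Invoking Proposition~\ref{f2-1Imf1} then yields $f_2^{-1}(\Img(f_1))=\Ker(f_1\mid f_2)$, and composing the two identities gives the desired conclusion.

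I do not expect any genuine obstacle here: the statement is essentially a repackaging of Proposition~\ref{f2-1Imf1} through the canonical presentation of preimages of submodules as kernels of composites with quotient maps. The only point requiring a word of care is the linearity of $p_1\circ f_2$, so that its kernel is indeed an $R$-submodule and the notation $\Ker(p_1\circ f_2)$ is meaningful; this is immediate, as both $p_1$ and $f_2$ are $R$-linear and the composite of $R$-linear maps is again $R$-linear. Consequently the proof should amount to little more than recording the chain of equivalences above.
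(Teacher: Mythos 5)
Your proof is correct and follows essentially the same route as the paper: the paper's one-line argument is exactly the equivalence $m_2\in\Ker(p_1\circ f_2)\Leftrightarrow f_2(m_2)\in\Img(f_1)$, which you have simply spelled out via $\Ker(p_1)=\Img(f_1)$ and Proposition~\ref{f2-1Imf1}. No issues.
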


\begin{proof}
$x\in\Ker(p_1\circ f_2:M_2\rightarrow N/\Img(f_1))\Leftrightarrow f_2(x)\in\Img(f_1)$
\end{proof}

On the other hand put the linear map $(f_1\, ,\, f_2):M_1\oplus M_2\rightarrow N$ sending $(m_1,m_2)\mapsto (f_1\, ,\, f_2)(m_1,m_2)=f_1(m_1)+f_2(m_2)$. Note that $\Ker(f_1)\subseteq\Ker(f_1\, ,\, f_2)$ because if $m\in\Ker(f_1)$ then $f_1(m)=0$; hence $(f_1\, ,\, f_2)(m,0)=0$ and consequently $(m,0)\in\Ker(f_1\, ,\, f_2)$. Moreover one has

\begin{theorem}\label{TeoremaConucleo}
$\Ker(f_1\mid f_2)=\coker[\Ker(f_1)\hookrightarrow\Ker(f_1\, ,\, f_2)]$
\end{theorem}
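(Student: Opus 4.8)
The plan is to exhibit an explicit $R$-linear surjection from $\Ker(f_1\, ,\, f_2)$ onto $\Ker(f_1\mid f_2)$ whose kernel is precisely the image of $\Ker(f_1)$ under the inclusion $m_1\mapsto(m_1,0)$, and then to invoke the first isomorphism theorem. Concretely, I would take the restriction to $\Ker(f_1\, ,\, f_2)$ of the projection onto the second summand,
$$
\pi:\Ker(f_1\, ,\, f_2)\longrightarrow M_2,\qquad (m_1,m_2)\mapsto m_2,
$$
which is manifestly $R$-linear.

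First I would identify the image of $\pi$. An element $m_2\in M_2$ lies in $\Img(\pi)$ exactly when there is some $m_1\in M_1$ with $(m_1,m_2)\in\Ker(f_1\, ,\, f_2)$, i.e. with $f_1(m_1)+f_2(m_2)=0$; by Definition \ref{DefKerKer} this says precisely that $m_2\in\Ker(f_1\mid f_2)$. Hence $\Img(\pi)=\Ker(f_1\mid f_2)$, so $\pi$ corestricts to a surjection onto the desired submodule.

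Next I would compute $\Ker(\pi)$. A pair $(m_1,m_2)\in\Ker(f_1\, ,\, f_2)$ lies in $\Ker(\pi)$ iff $m_2=0$, in which case the defining relation collapses to $f_1(m_1)=0$. Thus $\Ker(\pi)=\{(m_1,0):m_1\in\Ker(f_1)\}$, which is exactly the image of the inclusion $\Ker(f_1)\hookrightarrow\Ker(f_1\, ,\, f_2)$ recorded just before the statement.

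Finally, applying the first isomorphism theorem to $\pi$ yields
$$
\Ker(f_1\, ,\, f_2)/\Ker(\pi)\;\cong\;\Img(\pi),
$$
that is, $\coker[\Ker(f_1)\hookrightarrow\Ker(f_1\, ,\, f_2)]\cong\Ker(f_1\mid f_2)$, as claimed. I expect no genuine obstacle here: the argument reduces to a single application of the first isomorphism theorem, and the only points needing a line of verification are the two set-theoretic identifications of $\Img(\pi)$ and $\Ker(\pi)$, both of which unwind immediately from the definitions.
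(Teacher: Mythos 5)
Your proposal is correct and follows essentially the same route as the paper: restrict the second-factor projection to $\Ker(f_1\, ,\, f_2)$, check that its image is $\Ker(f_1\mid f_2)$ and its kernel is the embedded copy of $\Ker(f_1)$, and conclude by the first isomorphism theorem (the paper phrases this as exhibiting the short exact sequence $0\rightarrow\Ker(f_1)\rightarrow\Ker(f_1,f_2)\rightarrow\Ker(f_1\mid f_2)\rightarrow 0$). No gaps; your two set-theoretic verifications are exactly the ones the paper relies on.
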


\begin{proof}
The projection onto the second factor $\pi_2:M_{1}\oplus M_{2}\rightarrow M_{2}$ restricts to the onto map $\Ker(f_1,f_2)\stackrel{\pi_2}{\longrightarrow}\Ker(f_1\mid f_2)$ whose kernel consist of all pairs satisfying $\pi_2(x_1,x_2)=0$ and therefore $x_2=0$ by definition of $\pi_2$. So, a fortiori, $f_1(x_1)=0$, and consequently we are done because  $$\Ker\left[\Ker(f_1\,,\,f_2)\stackrel{\pi_2}{\longrightarrow}\Ker(f_1\mid f_2)\right]=\Ker(f_1)$$ 
That is, the following sequence is short exact 
\begin{align*}
0\rightarrow \Ker(f_1)&\hookrightarrow \Ker(f_1, f_2)\rightarrow  \Ker(f_1 \mid f_2)\rightarrow 0\\
m_{1}&\mapsto \ \ (m_{1},0)\\
& \ \ \ \ \ \ \ (m_{1},m_{2}) \mapsto \ \ m_{2}
\end{align*}
\end{proof}

The following properties of the kernel of a pair of linear maps, may be easily derived.
\begin{proposition} Consider the $R$-linear maps $f_i:M_i\rightarrow N$. Denote by $\mathbf{0}$ and $\mathbf{I}$ respectiveli the zero linear map and the identity map. Then
\begin{itemize}
\item[(i)] $\Ker(\mathbf{0}\mid f_2)=\Ker(f_{2})$
\item[(ii)] If $f_1$ is onto then $\Ker(f_1\mid f_2)=M_2$
\item[(iii)] $\Ker( \mathbf{0} \mid \mathbf{I})=0$
\item[(iv)] $\Ker(f_1\mid \mathbf{0})=M_{2}$
\item[(v)] In general, $\Img(f_2)\subseteq\Img(f_1)$ $\Rightarrow$ $\Ker(f_1\mid f_2)=M_2$
\item[(vi)] If $\Psi_2$ is an automorphism of $M_2$ then mapping $m_2\mapsto\Psi_2(m_2)$ is an isomorphism $\Ker(f_1\mid f_2\Psi_2)\rightarrow \Ker(f_1\mid f_2)$
\item[(vii)] $\Ker(f_1\mid f_2\Psi_2)=\Psi_2^{-1}(\Ker(f_1\mid f_2))$
\item[(viii)] If $\Psi_1$ is an automorphism of $M_1$ then $\Ker(f_1\Psi_1\mid f_2)=\Ker(f_1\mid f_2)$
\item[(ix)] If $\Psi$ is an isomorphism of $N$ then $\Ker(\Psi f_1\mid f_2)=\Ker(f_1\mid \Psi^{-1}f_2)$
\end{itemize}
\end{proposition}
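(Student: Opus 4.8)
The entire proposition can be deduced from the single identity $\Ker(f_1\mid f_2)=f_2^{-1}(\Img(f_1))$ established in Proposition \ref{f2-1Imf1}, together with elementary facts about images of linear maps and the contravariant behaviour of preimages. The plan is to treat each item by rewriting the relevant kernel in the form $g^{-1}(\Img h)$ and then simplifying, so that no item requires returning to the existential definition.

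For the items concerning degenerate or surjective maps I would argue as follows. Item (i) is immediate since $\Img(\mathbf{0})=0$, whence $\Ker(\mathbf{0}\mid f_2)=f_2^{-1}(0)=\Ker(f_2)$. Item (ii) follows because $f_1$ onto means $\Img(f_1)=N$, so $f_2^{-1}(N)=M_2$. Item (iii) is then the special case of (i) with $f_2=\mathbf{I}$, and item (iv) is the special case $\Img(\mathbf{0})=0\subseteq\Img(f_1)$ of the more general item (v); for (v) itself, whenever $\Img(f_2)\subseteq\Img(f_1)$ every $m_2\in M_2$ satisfies $f_2(m_2)\in\Img(f_1)$, so $f_2^{-1}(\Img(f_1))=M_2$.

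The items involving automorphisms rest on how preimages compose. For (vii) I would use $(f_2\Psi_2)^{-1}(\Img(f_1))=\Psi_2^{-1}\bigl(f_2^{-1}(\Img(f_1))\bigr)$, which gives $\Ker(f_1\mid f_2\Psi_2)=\Psi_2^{-1}(\Ker(f_1\mid f_2))$ at once. Item (vi) is then the refinement that, $\Psi_2$ being bijective and $R$-linear, its restriction carries $\Ker(f_1\mid f_2\Psi_2)$ isomorphically onto $\Ker(f_1\mid f_2)$ (well-definedness and surjectivity being checked against the preimage description). For (viii), since $\Psi_1$ is onto one has $\Img(f_1\Psi_1)=f_1(\Psi_1(M_1))=f_1(M_1)=\Img(f_1)$, so the two kernels share the same defining preimage. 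Finally, for (ix) I would compute both sides: $\Ker(\Psi f_1\mid f_2)=f_2^{-1}(\Psi(\Img f_1))$ while $\Ker(f_1\mid \Psi^{-1}f_2)=f_2^{-1}\bigl((\Psi^{-1})^{-1}(\Img f_1)\bigr)=f_2^{-1}(\Psi(\Img f_1))$, and observe they agree; alternatively one applies the bijection $\Psi^{-1}$ to the defining equation $\Psi f_1(m_1)+f_2(m_2)=0$.

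None of these steps presents a genuine obstacle; the only points requiring a little care are the bookkeeping about which hypothesis on $\Psi$ is actually used — surjectivity alone suffices in (viii), whereas (vi) needs full bijectivity to obtain an isomorphism and (ix) needs $\Psi$ invertible so that applying $\Psi^{-1}$ preserves the existential quantifier — and keeping straight the direction of the contravariant identity $(g\circ h)^{-1}=h^{-1}\circ g^{-1}$ used throughout.
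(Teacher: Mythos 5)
Your proof is correct and follows essentially the same route as the paper: every item is an elementary consequence of the preimage description $\Ker(f_1\mid f_2)=f_2^{-1}(\Img(f_1))$ or, equivalently, of the existential definition, and your case-by-case verifications match the paper's. The only (immaterial) difference is the order of (vi) and (vii): you establish (vii) first via $(f_2\Psi_2)^{-1}=\Psi_2^{-1}\circ f_2^{-1}$ and obtain (vi) as a refinement, whereas the paper proves (vi) directly by showing $\Psi_2$ restricts to an isomorphism and then deduces (vii).
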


\begin{proof}
$(i), (ii), (iii)$ are straightforward from Proposition \ref{f2-1Imf1}; $(iv), (v)$ are directly obtained from Definition \ref{DefKerKer}. 

Property $(vi)$ is consequence of the fact that $\Psi_2$ maps $\Ker(f_1\mid f_2\Psi_2)$ onto $\Ker(f_1\mid f_2)$. Since $\Psi_2$ is injective (because it is a restriction an isomorphism) it follows that $\Psi_2$ is itself an isomorphism and hence the result. Property $(vii)$ follows straightforward from $(vi)$.

Assertion $(viii)$ is clear because $\Ker(f_1\Psi_1\mid f_2)=\{m_2\in M_2 \mid \exists m_1\in M_1 : f_1(\Psi_1m_1)+f_2m_2=0\}$

Finally to prove $(ix)$ note that $\Ker(\Psi f_1\mid f_2)=\{m_2\in M_2 \mid \exists m_1\in M_1 : \Psi f_1(m_1)+f_2m_2=0\}$. Defining condition is equivalent to saying that $f_1m_1+\Psi^{-1}f_2m_2=0$ which is the definition of $\Ker(f_1\mid \Psi^{-1}f_2)$
\end{proof}

\section{Hereditary rings}

A $R$-module $M$ is called hereditary if every submodule of $M$ is projective. If every ideal of $R$ is projective (that is, $R$ is a ring of global dimension one) then every projective $R$-module is hereditary and commutative ring $R$ is called hereditary. A hereditary integral domain is a Dedekind domain.

Assume that $R$-modules $M_i$, $N$ are hereditary $R$-modules. It follows that all $R$-modules in exact sequence of Theorem \ref{TeoremaConucleo}
\begin{align*}
0\rightarrow \Ker(f_1)&\hookrightarrow \Ker(f_1, f_2)\rightarrow  \Ker(f_1 \mid f_2)\rightarrow 0\\
\end{align*}
are projective. Therefore
\begin{theorem}\label{FactorHereditario}
If $R$-modules $M_i$ and $N$ are hereditary (in particular if $R$ is itself an hereditary ring) then one has the factorization $\Ker(f_1,f_2)\cong\Ker(f_1)\oplus\Ker(f_1\mid f_2)$ and, consequently $$\Ker(f_1 \mid f_2)\cong \Ker(f_1, f_2)/\Ker(f_1)$$
\end{theorem}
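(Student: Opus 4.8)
The plan is to exploit the short exact sequence
\begin{equation*}
0\rightarrow \Ker(f_1)\hookrightarrow \Ker(f_1, f_2)\xrightarrow{\pi_2} \Ker(f_1 \mid f_2)\rightarrow 0
\end{equation*}
furnished by Theorem \ref{TeoremaConucleo}, and to show that it \emph{splits}. The only tool needed is the standard splitting lemma: a short exact sequence $0\to A\to B\to C\to 0$ of $R$-modules satisfies $B\cong A\oplus C$ as soon as the rightmost term $C$ is projective, because then the identity map of $C$ lifts along the surjection $B\to C$ to a section $C\to B$. Thus the entire argument reduces to checking that the quotient term $\Ker(f_1\mid f_2)$ is projective.

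First I would record, via Proposition \ref{f2-1Imf1}, that $\Ker(f_1\mid f_2)=f_2^{-1}(\Img(f_1))$ is a submodule of $M_2$. Since $M_2$ is assumed hereditary, every submodule of $M_2$ is projective; in particular $\Ker(f_1\mid f_2)$ is projective. (The same reasoning shows $\Ker(f_1)\subseteq M_1$ and $\Ker(f_1,f_2)\subseteq M_1\oplus M_2$ are projective as well, which is the content of the remark preceding the statement, but for the splitting only projectivity of the right-hand term is actually required.)

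With projectivity of $\Ker(f_1\mid f_2)$ established, the splitting lemma produces a section of $\pi_2$ and hence the direct sum decomposition
\begin{equation*}
\Ker(f_1,f_2)\cong\Ker(f_1)\oplus\Ker(f_1\mid f_2).
\end{equation*}
The second assertion then follows immediately: applying the first isomorphism theorem to the surjection $\pi_2$, whose kernel is $\Ker(f_1)$ by Theorem \ref{TeoremaConucleo}, gives $\Ker(f_1,f_2)/\Ker(f_1)\cong\Ker(f_1\mid f_2)$; equivalently one reads this directly off the decomposition above.

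Because everything is a direct application of the splitting lemma, I do not expect a serious obstacle. The single point requiring care is to identify \emph{which} of the three modules must be projective: it is the quotient term $\Ker(f_1\mid f_2)$, not the sub- or middle term, whose projectivity forces the sequence to split. Accordingly, the hereditary hypothesis is used precisely to guarantee that this submodule of $M_2$ is projective, and it is worth noting that hereditariness of $M_1$ and $N$ is stronger than what the splitting conclusion strictly needs.
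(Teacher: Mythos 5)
Your proposal is correct and follows essentially the same route as the paper: both rest on the short exact sequence of Theorem \ref{TeoremaConucleo} together with projectivity coming from the hereditary hypothesis to split it. Your added precision --- that only projectivity of the quotient term $\Ker(f_1\mid f_2)$, guaranteed by $M_2$ being hereditary, is actually needed for the splitting --- is a small but accurate sharpening of the paper's remark that all three terms are projective.
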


\begin{example}
Put a matrix $A\in\mathbb{R}^{n\times n}$ with no real eigenvalues and $B\in\mathbb{R}^{n\times m}$ then one has the isomorphism $\Ker(z\mathbf{I}-A\mid B)\cong \Ker(z\mathbf{I}-A,B)$. This result, translated to the case of $z=\sigma$ the shift operator in $\mathbb{R}$-algebra $\mathcal{A}=\mathbb{R}^{\mathbb{Z}}$, implies that sequences of admissible controls $\vec{u}(t)$ of linear system $\vec{x}(t+1)=A\vec{x}(t)+B\vec{u}(t)$ are in bijective correspondece with trajectories (solutions) $(\vec{x}(t),\vec{u}(t))$ of linear system.
\end{example}


\section{Scalar Extension. Base change}

Assume in the sequel that $S$ is a $R$-algebra with structural morphism $\rho:R\rightarrow S$. Scalar extension of $R$-module $M$ is the $S$-module $\rho^{\ast}(M)=M\otimes_R S$, while scalar extension of $R$-linear map $f:M\rightarrow N$ is given by $\rho^{\ast}(f)=f\otimes\mathbf{I}_S:M\otimes_R S\rightarrow N\otimes_R S$.

Scalar extension functor $(-\otimes_R S)$ preserves onto linear mappings, it is right-exact (see \cite{Atiyah}) and hece it preserves epimorphisms. But notice that neither monomorphisms nor kernels are conserved in general by scalar extensions; for instance take $\Ker[2:\mathbb{Z}\rightarrow\mathbb{Z}]=0$ and observe that $\Ker[2\otimes\mathbf{I}_{\mathbb{Z}/2\mathbb{Z}}:\mathbb{Z}\otimes_{\mathbb{Z}}\mathbb{Z}/2\mathbb{Z}\rightarrow \mathbb{Z}\otimes_{\mathbb{Z}}\mathbb{Z}/2\mathbb{Z}]=\Ker[0:\mathbb{Z}/2\mathbb{Z}\rightarrow\mathbb{Z}/2\mathbb{Z}]=\mathbb{Z}/2\mathbb{Z}$

A $R$-module (respectively $R$-algebra) $S$ is flat (respectively flat algebra) if functor $(-\otimes_R M)$ happens to be exact; that is it transforms exact sequences into exact sequences. Examples of $R$-flat modules are free $R$-modules and projective $R$-modules \cite{Atiyah},\cite{Bass}, \cite{BOUR} or \cite{W}.

Within these conditions one has


\begin{theorem}\label{flat}
If $S$ is a flat $R$-algebra then
$$\rho^{\ast}(\Ker(f_1\mid f_2))=\Ker(\rho^{\ast}(f_1) \mid \rho^{\ast}(f_2))$$
\end{theorem}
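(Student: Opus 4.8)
The plan is to reduce the statement to two facts about the tensor functor $-\otimes_R S$: that it always commutes with forming cokernels (right-exactness), and that when $S$ is flat it additionally commutes with forming kernels (exactness). The bridge to $\Ker(f_1\mid f_2)$ is the earlier characterisation $\Ker(f_1\mid f_2)=\Ker(p_1\circ f_2)$, where $p_1\colon N\to N/\Img(f_1)$ is the quotient projection. Writing $\Ker(f_1\mid f_2)$ as a single ordinary kernel of this kind is what allows flatness to act directly.

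First I would identify the scalar extension of the target. From the right-exact sequence $M_1\xrightarrow{f_1}N\xrightarrow{p_1}N/\Img(f_1)\to 0$, applying the right-exact functor $-\otimes_R S$ (and using that it carries compositions to compositions) yields the exact sequence $\rho^{\ast}(M_1)\xrightarrow{\rho^{\ast}(f_1)}\rho^{\ast}(N)\to\rho^{\ast}(N/\Img(f_1))\to 0$. Hence $\rho^{\ast}(N/\Img(f_1))\cong\rho^{\ast}(N)/\Img(\rho^{\ast}(f_1))$, and under this identification $\rho^{\ast}(p_1)$ is precisely the quotient map $p_1'\colon\rho^{\ast}(N)\to\rho^{\ast}(N)/\Img(\rho^{\ast}(f_1))$. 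Crucially, this step needs only right-exactness, not flatness.

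Next I would move the outer kernel past the functor. Since $S$ is flat, $-\otimes_R S$ is exact and therefore preserves kernels, so $\rho^{\ast}(\Ker g)=\Ker(\rho^{\ast}(g))$ for every $R$-linear map $g$. Taking $g=p_1\circ f_2$ and using functoriality $\rho^{\ast}(p_1\circ f_2)=\rho^{\ast}(p_1)\circ\rho^{\ast}(f_2)=p_1'\circ\rho^{\ast}(f_2)$, I obtain $\rho^{\ast}(\Ker(f_1\mid f_2))=\rho^{\ast}(\Ker(p_1\circ f_2))=\Ker(p_1'\circ\rho^{\ast}(f_2))$. Applying the same characterisation in the reverse direction to the extended maps identifies the right-hand side as $\Ker(\rho^{\ast}(f_1)\mid\rho^{\ast}(f_2))$, which is the claim.

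The step I expect to be the main obstacle is the identification $\rho^{\ast}(N/\Img(f_1))\cong\rho^{\ast}(N)/\Img(\rho^{\ast}(f_1))$ together with the recognition of $\rho^{\ast}(p_1)$ as the corresponding quotient map: one must check carefully that the tensor functor commutes with passage to the image and cokernel of $f_1$, and that this already holds by right-exactness alone. Once this is in place, flatness enters only to commute the single remaining outer kernel, giving a clean separation between the cokernel part (right-exactness) and the kernel part (flatness). An alternative route would tensor the short exact sequence of Theorem \ref{TeoremaConucleo} directly; flatness keeps it exact, and recognising the three extended terms via preservation of kernels and of direct sums yields the same conclusion.
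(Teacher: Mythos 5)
Your proof is correct, but it takes a genuinely different route from the paper's. The paper tensors the short exact sequence of Theorem \ref{TeoremaConucleo}, namely $0\rightarrow \Ker(f_1)\rightarrow \Ker(f_1,f_2)\rightarrow \Ker(f_1\mid f_2)\rightarrow 0$, with $S$; flatness is invoked twice, once to keep this sequence exact and once to identify $\Ker(f_1)\otimes_R S$ and $\Ker(f_1,f_2)\otimes_R S$ with $\Ker(\rho^{\ast}(f_1))$ and $\Ker(\rho^{\ast}(f_1),\rho^{\ast}(f_2))$, after which $\Ker(f_1\mid f_2)\otimes_R S$ is read off as the cokernel of the extended inclusion. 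You instead work from the other presentation, $\Ker(f_1\mid f_2)=\Ker(p_1\circ f_2)$, and your argument cleanly separates the two ingredients: right-exactness alone identifies $\rho^{\ast}(N/\Img(f_1))$ with $\rho^{\ast}(N)/\Img(\rho^{\ast}(f_1))$ and $\rho^{\ast}(p_1)$ with the corresponding quotient map, while flatness is used exactly once, to commute the single outer kernel. That isolation of where flatness actually enters is a genuine gain in transparency (and makes clear that the target-side identification is free). What the paper's route buys in exchange is that it keeps the central exact sequence of the paper in view and yields directly the compatibility of that sequence with base change, which is the form used in the later sections on product rings and hereditary factorizations. The alternative you sketch in your final sentence is, in fact, precisely the paper's proof.
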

\begin{proof}
Scalar extension functor $\rho^{\ast}(-)=-\otimes_R S$ is exact because $S$ is $R$-flat. Hence exact sequence 
$$
\xymatrix{
0\ar[r] &  \Ker(f_1) \ar[r]^{\iota}  & \Ker(f_1, f_2) \ar[r]^{\pi}& \Ker(f_1 \mid f_2) \ar[r] & 0 
}
$$ yields the exact sequence 
$$
\xymatrix{
0\ar[r] &  \Ker(f_1)\otimes_R S \ar[r]^{\iota\otimes \mathbf{I}_S}  & \Ker(f_1, f_2)\otimes_R S \ar[r]^{\pi\otimes \mathbf{I}_S} & \Ker(f_1 \mid f_2)\otimes_R S \ar[r] & 0 
}
$$

Once again, since $S$ is $R$-flat it follows the commutative square
$$
\xymatrix{
0\ar[r] &  \Ker(f_1)\otimes_R S \ar[r]^{\iota\otimes \mathbf{I}_S}\ar@{=}[d]  & \Ker(f_1, f_2)\otimes_R S\ar@{=}[d]\\
0\ar[r] &  \Ker(f_1\otimes \mathbf{I}_S) \ar[r]^{\iota'}  & \Ker(f_1\otimes \mathbf{I}_S, f_2\otimes \mathbf{I}_S)
}
$$
and therefore $\rho^{\ast}(\Ker(f_{1}| f_{2}))=\Ker(f_{1}| f_{2})\otimes_{R}S= \Ker(f_{1}\otimes \mathbf{I}_S\mid f_{2}\otimes \mathbf{I}_S)=\Ker(\rho^{\ast}(f_1) \mid \rho^{\ast}(f_2))$.

\end{proof}

\section{Product rings}
Now we deal with the case of $R=R_1\times\cdots\times R_t$ being a finite direct product of rings. In this case each factor ring $R_i\cong e_iR$ where $e_i=(0,...,0,1,0,...,0)$ is the $i$th structrual idempotent of product. Hence $R^1\cong e_1R\oplus\cdots\oplus e_tR$ and therefore each factor $R_i\cong e_iR$ is a flat $R$-algebra because it is projective due to it is a direct summand of the free $R$-module $R^1$. This decomposition can be traslated to kernels of pairs of linear maps:

\begin{theorem}\label{ProductoCuerpos}
If $\pi_i:R_1\times\cdots\times R_t\rightarrow R_i$ is the projection onto $i$th factor, then $\pi_i$ is also structural $R$-algebra morphism and, for a given pair of $R$-linear maps $f_j:M_j\rightarrow N$, $j=1,2$ one has
$$
\Ker(f_1\mid f_2)\cong\bigoplus_{i=1}^{t}\Ker(\pi_i^{\ast}(f_1)\mid\pi_i^{\ast}(f_2))
$$ 
\end{theorem}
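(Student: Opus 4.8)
The plan is to combine the flat base-change result of Theorem \ref{flat} with the canonical decomposition of an arbitrary module over a finite product ring. First I would record the elementary structure: the structural idempotents $e_1,\ldots,e_t$, where $e_i=(0,\ldots,0,1,0,\ldots,0)$, form a complete orthogonal system, meaning $e_ie_j=0$ for $i\neq j$, $e_i^2=e_i$, and $\sum_{i=1}^t e_i=1$. Since each $R_i\cong e_iR$ is a direct summand of the free module $R$, it is projective and hence $R$-flat, so Theorem \ref{flat} is available for every structural morphism $\pi_i:R\rightarrow R_i$.

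The key step is the observation that any $R$-module $M$ decomposes naturally along the idempotents as
$$
M=\bigoplus_{i=1}^t e_iM,\qquad e_iM\cong M\otimes_R e_iR=M\otimes_R R_i=\pi_i^{\ast}(M).
$$
Indeed, completeness $\sum_i e_i=1$ forces $m=\sum_i e_im$ for every $m\in M$, while orthogonality $e_ie_j=0$ makes the summands $e_iM$ pairwise intersect trivially, so the sum is direct; the identification $e_iM\cong M\otimes_R R_i$ is the standard one sending $e_im\mapsto m\otimes e_i$. This produces a functorial isomorphism $M\cong\bigoplus_{i=1}^t\pi_i^{\ast}(M)$.

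Applying this decomposition to the specific module $M=\Ker(f_1\mid f_2)$ gives
$$
\Ker(f_1\mid f_2)\cong\bigoplus_{i=1}^t\pi_i^{\ast}\big(\Ker(f_1\mid f_2)\big),
$$
and then, since each $R_i$ is a flat $R$-algebra, Theorem \ref{flat} rewrites each summand as $\pi_i^{\ast}(\Ker(f_1\mid f_2))=\Ker(\pi_i^{\ast}(f_1)\mid\pi_i^{\ast}(f_2))$. Substituting yields the asserted isomorphism, completing the argument.

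I do not anticipate a real obstacle, as the proof is essentially the assembly of two facts already in hand. The only point deserving care is checking that the decomposition $M\cong\bigoplus_i\pi_i^{\ast}(M)$ is genuinely canonical and compatible with the flat base-change identification, so that the direct-sum isomorphism is the one induced by the idempotents rather than an accidental coincidence of modules; but this follows routinely once orthogonality and completeness of the $e_i$ are established.
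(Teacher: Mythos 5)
Your proof is correct, and in substance it travels the same road as the paper: both decompose $\Ker(f_1\mid f_2)$ along the factors of the product ring and both rest on the flatness of each $R_i\cong e_iR$ together with Theorem \ref{flat}. The difference is one of completeness rather than of strategy. The paper's proof only draws the diagram supplied by the universal property of the product, which produces the comparison map $\Phi:\Ker(f_1\mid f_2)\rightarrow\bigoplus_{i=1}^{t}\Ker(\pi_i^{\ast}(f_1)\mid\pi_i^{\ast}(f_2))$, and then declares $\Phi$ to be an isomorphism; the universal property by itself gives existence and uniqueness of $\Phi$, not bijectivity. Your idempotent decomposition $M=\bigoplus_i e_iM\cong\bigoplus_i\pi_i^{\ast}(M)$, applied to $M=\Ker(f_1\mid f_2)$ and then rewritten summand by summand via Theorem \ref{flat}, is precisely the missing justification, so your write-up is, if anything, more complete than the paper's. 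One small point to tighten: pairwise trivial intersection of the submodules $e_iM$ is not in general sufficient for a sum of $t>2$ submodules to be direct; the clean argument is that any relation $\sum_i e_im_i=0$ gives $e_jm_j=e_j\bigl(\sum_i e_im_i\bigr)=0$ for every $j$, by orthogonality and idempotency of the $e_i$.
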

\begin{proof}
The universal property of product yields the natural isomorphism $\Phi$ which is unique commutating both triangles in below natural diagram 

$$
\begin{xy}
(150,20)*+{\Ker(\pi_i^{\ast}(f_1)\mid\pi_i^{\ast}(f_2))}="Ker_i";
(150,0)*+{\Ker(\pi_j^{\ast}(f_1)\mid\pi_j^{\ast}(f_2))}="Ker_j";
(90,10)*+{\bigoplus_{i=1}^{t}\Ker(\pi_i^{\ast}(f_1)\mid\pi_i^{\ast}(f_2))}="OplusKer";
(30,10)*+{\Ker(f_1\mid f_2)}="Ker";
{\ar@{->}^{p_1} "OplusKer";"Ker_i"};
{\ar@{->}^{p_2} "OplusKer";"Ker_j"};
{\ar@{->}@/^{2pc}/^{\pi_i^{\ast}} "Ker"; "Ker_i"}
{\ar@{->}@/_{2pc}/_{\phi_j^{\ast}} "Ker"; "Ker_j"}
{\ar@{-->}^{\Phi}  "Ker";"OplusKer"}
\end{xy}
$$

\end{proof}

Now we conclude with the case of $R$ being a finite product of fields. This case contains modular rings $\mathbb{Z}/m\mathbb{Z}$ where $m$ is a square-free integer and would be of interest in convolutional coding.

\begin{corollary}\label{LocalGlobalKer}
Let $\mathbb{K}_i$ be a field for each $i$ and consider a ring $R=\mathbb{K}_1\times\cdots\times\mathbb{K}_s$. Let $A\in R^{p\times q_1}$ and $B\in R^{p\times q_2}$ be matrices. Then 
$$
\Ker(A\mid B)=e_1\cdot\Ker(\pi_1(A)\mid \pi_1(B))+\cdots+e_s\cdot\Ker(\pi_s(A)\mid \pi_s(B))
$$
\end{corollary}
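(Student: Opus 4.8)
The plan is to specialize Theorem~\ref{ProductoCuerpos} to the case $t=s$ with each factor a field, and then to realize the abstract direct sum it provides as the concrete internal decomposition of $R^{q_2}$ furnished by the structural idempotents $e_1,\dots,e_s$. Throughout I read $A$ and $B$ as the $R$-linear maps $A\colon R^{q_1}\to R^p$ and $B\colon R^{q_2}\to R^p$, so that $f_1=A$, $f_2=B$, $N=R^p$ and $\Ker(A\mid B)\subseteq R^{q_2}$. First I would note that for each $i$ the projection $\pi_i\colon R\to\mathbb{K}_i$ identifies $\mathbb{K}_i$ with the flat $R$-algebra $e_iR$, and that the scalar extension $\pi_i^{\ast}$ carries the free module $R^n$ to $\mathbb{K}_i^n$ and the map $A$ to the matrix $\pi_i(A)$ obtained by applying $\pi_i$ entrywise. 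Thus the summands $\Ker(\pi_i^{\ast}(A)\mid\pi_i^{\ast}(B))$ of Theorem~\ref{ProductoCuerpos} coincide with the kernels of pairs $\Ker(\pi_i(A)\mid\pi_i(B))\subseteq\mathbb{K}_i^{q_2}$ appearing in the statement.

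The main step is a componentwise criterion for membership in $\Ker(A\mid B)$. Writing $w\in R^{q_2}$ as $w=(w_1,\dots,w_s)$ with $w_i\in\mathbb{K}_i^{q_2}$, and any candidate $u\in R^{q_1}$ as $u=(u_1,\dots,u_s)$, the equation $Au+Bw=0$ holds in $R^p=\prod_i\mathbb{K}_i^p$ if and only if $\pi_i(A)u_i+\pi_i(B)w_i=0$ in $\mathbb{K}_i^p$ for every $i$, because arithmetic in a product of rings is performed coordinatewise. The decisive observation is that the existential quantifier defining $\Ker(-\mid-)$ distributes over the product: a simultaneous solution $u$ over $R$ exists precisely when a solution $u_i$ over $\mathbb{K}_i$ exists for each $i$, since one may assemble the $u_i$ into $u=(u_1,\dots,u_s)$ and conversely restrict a global solution to each factor. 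Hence $w\in\Ker(A\mid B)$ if and only if $w_i\in\Ker(\pi_i(A)\mid\pi_i(B))$ for all $i$.

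Finally I would transcribe this criterion into the asserted identity. The decomposition $R^{q_2}=\bigoplus_i e_iR^{q_2}$ writes each $w$ uniquely as $w=\sum_i e_iw$, the summand $e_iw$ carrying exactly the component $w_i$ under the identification $e_iR\cong\mathbb{K}_i$. The criterion then says that $\Ker(A\mid B)$ consists of the sums $\sum_i e_iw$ with every $w_i\in\Ker(\pi_i(A)\mid\pi_i(B))$, which is precisely $e_1\cdot\Ker(\pi_1(A)\mid\pi_1(B))+\cdots+e_s\cdot\Ker(\pi_s(A)\mid\pi_s(B))$; the sum is direct because $e_ie_j=0$ for $i\neq j$, so the summands intersect only in zero, recovering the direct-sum isomorphism of Theorem~\ref{ProductoCuerpos}. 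I expect the only genuine obstacle to be the bookkeeping of the main step, namely checking cleanly that the existence of a single solution $u$ over $R$ is equivalent to the existence of separate solutions $u_i$ over each $\mathbb{K}_i$; the remainder is a faithful transcription of the product structure.
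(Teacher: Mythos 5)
Your proposal is correct. The paper itself supplies no proof of this corollary: it is stated immediately after Theorem~\ref{ProductoCuerpos} as an evident specialization, so the intended argument is exactly your first step, namely apply the theorem with $t=s$, $f_1=A$, $f_2=B$, and identify $\pi_i^{\ast}(A)$ with the entrywise matrix $\pi_i(A)$. Where you go beyond the paper is in noticing, correctly, that Theorem~\ref{ProductoCuerpos} only delivers an abstract isomorphism $\Ker(A\mid B)\cong\bigoplus_i\Ker(\pi_i(A)\mid\pi_i(B))$, whereas the corollary asserts an \emph{equality} of submodules of $R^{q_2}$ realized internally via the idempotents $e_i$; your coordinatewise ``main step'' (the equation $Au+Bw=0$ splits into independent equations over each $\mathbb{K}_i$, and the existential quantifier distributes over a finite product) is precisely what is needed to upgrade the isomorphism to that concrete identity, and it is a gap the paper leaves silent. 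Your direct argument also buys two things the paper's route does not: it avoids the flatness machinery of Theorem~\ref{flat} entirely, and it nowhere uses that the factors $\mathbb{K}_i$ are fields, so it proves the identity for any finite product of rings (the field hypothesis only matters for the subsequent corollary, which invokes the hereditary factorization). The proof is sound as written.
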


Notice that Theorem \ref{TeoremaConucleo} applies to above case of $R$ beign a finite product of fields, hence one has the following result.

\begin{corollary}
Let $\mathbb{K}_i$ be a field for each $i$ and consider a ring $R=\mathbb{K}_1\times\cdots\times\mathbb{K}_s$. Let $A\in R^{p\times q_1}$ and $B\in R^{p\times q_2}$ be matrices. Then 
$$
\Ker(A\mid B)=\frac{\Ker(\pi_1(A), \pi_1(B))}{\Ker(\pi_1(A))}\oplus\cdots\oplus\frac{\Ker(\pi_s(A), \pi_s(B))}{\Ker(\pi_s(A))}
$$

\end{corollary}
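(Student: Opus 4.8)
The plan is to assemble the result formally from two theorems already established: the product-ring decomposition of Theorem \ref{ProductoCuerpos} and the hereditary factorization of Theorem \ref{FactorHereditario} (which itself rests on the short exact sequence of Theorem \ref{TeoremaConucleo}). Essentially, I would decompose the kernel factorwise over the product $R=\mathbb{K}_1\times\cdots\times\mathbb{K}_s$, and then split each factor using that each $\mathbb{K}_i$ is a field.

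First I would apply Theorem \ref{ProductoCuerpos} with $t=s$ and $R_i=\mathbb{K}_i$. Since each projection $\pi_i:R\rightarrow\mathbb{K}_i$ is a structural $R$-algebra morphism, that theorem gives
$$
\Ker(A\mid B)\cong\bigoplus_{i=1}^{s}\Ker(\pi_i^{\ast}(A)\mid\pi_i^{\ast}(B)).
$$
The next step is to identify the scalar extensions. Viewing $A$ and $B$ as the $R$-linear maps $R^{q_1}\rightarrow R^p$ and $R^{q_2}\rightarrow R^p$ determined by the matrices, their base change along $\pi_i$ is obtained by applying $\pi_i$ entrywise, so that $\pi_i^{\ast}(A)=\pi_i(A)$ and $\pi_i^{\ast}(B)=\pi_i(B)$ as $\mathbb{K}_i$-linear maps.

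Second, I would apply Theorem \ref{FactorHereditario} over each field $\mathbb{K}_i$. The hypothesis of that theorem demands that the modules involved be hereditary; but since $\mathbb{K}_i$ is a field, every $\mathbb{K}_i$-module is a vector space and every submodule is again a free, hence projective, vector space, so $\mathbb{K}_i$ is a hereditary ring and the short exact sequence of Theorem \ref{TeoremaConucleo} splits. Consequently, for each $i$,
$$
\Ker(\pi_i(A)\mid\pi_i(B))\cong\frac{\Ker(\pi_i(A),\pi_i(B))}{\Ker(\pi_i(A))}.
$$
Substituting this isomorphism into each summand of the direct sum from the first step yields exactly the claimed decomposition.

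I do not expect a genuine obstacle here, since both ingredients are already in hand; the only points requiring care are the two verifications just described, namely that scalar extension along $\pi_i$ coincides with the entrywise reduction of the matrices, and that each field factor satisfies the hereditary hypothesis of Theorem \ref{FactorHereditario}. Both are immediate, so the proof is in effect a matter of chaining the two theorems together.
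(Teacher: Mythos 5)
Your proposal is correct and follows essentially the same route the paper intends: it chains Theorem \ref{ProductoCuerpos} (factorwise decomposition over the product of fields) with Theorem \ref{FactorHereditario} applied over each field $\mathbb{K}_i$, exactly as the paper indicates when it remarks that Theorem \ref{TeoremaConucleo} applies to each factor. Your two explicit verifications (that $\pi_i^{\ast}$ acts entrywise on the matrices, and that fields are hereditary) are sound and only make explicit what the paper leaves implicit.
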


Above results might be explained with an example. Consider the modular integer ring $\mathbb{Z}/30\mathbb{Z}\cong\mathbb{Z}/2\times\mathbb{Z}/3\mathbb{Z}\times\mathbb{Z}/5\mathbb{Z}$, where isomorphism is given by Chinese Remainder Theorem $$(a,b,c)\mapsto \text{unique }x\text{ such that }x=a(\text{mod }2),x=b(\text{mod }3),x=c(\text{mod }5)$$
Thence structural idempotents are given by:
$$
\xymatrix{
\mathbb{Z}/2\times\mathbb{Z}/3\mathbb{Z}\times\mathbb{Z}/5\mathbb{Z} \ar[r]  & \mathbb{Z}/30\\
(1,0,0) \ar[r] &  15\\
(0,1,0) \ar[r] &  10\\
 (0,0,1)\ar[r] &  6 
}
$$
Thus the kernel might be recovered from local data on every factor field by using Corollary \ref{LocalGlobalKer} with above structural idempotents $e_1=15$, $e_2=10$, and $e_3=6$.

Note that this result can also be generalized to the case of polynomial matrices which would be useful in the behavioral theory of linear systems and in particular to convolutional codes.

\begin{corollary}
Consider matrices $A(z), B(z)$ of adequate sizes, $p\times q_1$ and $p\times q_2$ respectively, and entries in $R[z]$ where $R=R_1\times\cdots\times R_s$ is a product ring with structural idempotents $e_i\in R$. Then trajectories can be computed locally and glued together; that is to say, one has
$$
\Ker(A(z)\mid B(z))=\{u(z)\mid\exists x(z):A(z)x(z)+B(z)u(z)=0\}=
$$
$$
=e_1\Ker(\pi_1(A)(z)\mid\pi_1(B)(z))+\cdots+e_s\Ker(\pi_s(A)(z)\mid\pi_s(B)(z))
$$
\end{corollary}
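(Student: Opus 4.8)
The plan is to reduce the statement to the product-ring decomposition of Theorem~\ref{ProductoCuerpos}, exploiting the fact that forming polynomials commutes with finite products. First I would record the ring isomorphism $R[z]\cong R_1[z]\times\cdots\times R_s[z]$: writing a polynomial as $\sum_k a_k z^k$ with coefficients $a_k=(a_k^{(1)},\dots,a_k^{(s)})\in R$ and regrouping componentwise sends it to $(\sum_k a_k^{(1)}z^k,\dots,\sum_k a_k^{(s)}z^k)$. Under this identification the structural idempotents $e_i\in R$, viewed as constant polynomials, are exactly the structural idempotents of $R_1[z]\times\cdots\times R_s[z]$, and the projection $\pi_i\colon R[z]\to R_i[z]$ acts on a polynomial matrix $A(z)=\sum_k A_k z^k$ by applying $\pi_i$ to each coefficient, so that $\pi_i(A)(z)=\sum_k\pi_i(A_k)z^k$. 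Hence $R[z]$ is again a finite product ring, with factor rings $R_i[z]$ and the same structural idempotents $e_i$.

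Next I would identify the scalar extension along $\pi_i$ with this coefficientwise projection. Since $R_i[z]=e_iR[z]$ is a direct summand of the free module $R[z]$, it is projective and therefore flat over $R[z]$, so Theorem~\ref{flat} is available; moreover tensoring a map of free modules by $R_i[z]=R[z]/\Ker\pi_i$ merely applies $\pi_i$ to the entries, whence $\pi_i^{\ast}(A(z))=\pi_i(A)(z)$ and likewise for $B(z)$. With these identifications, Theorem~\ref{ProductoCuerpos} applied to the product ring $R[z]$ yields
$$
\Ker(A(z)\mid B(z))\cong\bigoplus_{i=1}^{s}\Ker\bigl(\pi_i(A)(z)\mid\pi_i(B)(z)\bigr).
$$

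Finally I would transcribe this external direct sum into the internal sum asserted in the statement, exactly as in Corollary~\ref{LocalGlobalKer}. For any module $M$ over the product ring $R[z]$ there is the canonical idempotent decomposition $M=e_1M\oplus\cdots\oplus e_sM$; applying it to $M_2=R[z]^{q_2}$ identifies the summand indexed by $i$ in the displayed isomorphism with $e_i\cdot\Ker(\pi_i(A)(z)\mid\pi_i(B)(z))$ sitting inside $R[z]^{q_2}$, and the isomorphism $\Phi$ of Theorem~\ref{ProductoCuerpos} is precisely the map realizing this. Matching the two descriptions gives the claimed equality, while the reading in terms of trajectories $x(z),u(z)$ is immediate from Definition~\ref{DefKerKer}. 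I expect the only delicate point to be this last bookkeeping: checking that the abstract external direct sum of the $R_i[z]$-modules coincides, as a submodule of $R[z]^{q_2}$, with the internal sum $e_1\Ker(\pi_1(A)(z)\mid\pi_1(B)(z))+\cdots+e_s\Ker(\pi_s(A)(z)\mid\pi_s(B)(z))$; everything else is a routine transcription of the componentwise product structure.
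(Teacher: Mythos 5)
Your proposal is correct and follows essentially the same route as the paper: identify $R[z]\cong R_1[z]\times\cdots\times R_s[z]$ as a product ring with the same structural idempotents, note flatness of the factors, and apply Theorem~\ref{ProductoCuerpos}. The paper's own proof is a three-line sketch of exactly this; your version merely fills in the details it leaves implicit, including the final translation from the external direct sum to the internal idempotent sum.
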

\begin{proof}
Note that $R[z]=(R_1\times\cdots\times R_s)[z]\cong R_1[z]\times\cdots\times R_s[z]$ and thus $R_i[z]$ is a flat $R[z]$-algebra. It only remains to detect structural morphism, which is a trivial exercise. Then we conclude by using Theorem \ref{ProductoCuerpos}
\end{proof}

To conclude note that if $\mathbb{K}$ is a field then $\mathbb{K}[z]$ is a principal ideal domain and thus a Dedekind domain and hence hereditary. Thus by applying Theorem \ref{FactorHereditario} one has the following factorization result.

\begin{corollary}
Consider matrices $A(z), B(z)$ of adequate sizes, $p\times q_1$ and $p\times q_2$ respectively, and entries in $R[z]$ where $R=\mathbb{K}_1\times\cdots\times \mathbb{K}_s$ is a product of fields. Then 
$$
\Ker((A(z)\mid B(z))=\frac{\Ker(\pi_1(A)(z)\mid\pi_1(B)(z))}{\Ker(\pi_1(A)(z))}\oplus\cdots\oplus\frac{\Ker(\pi_1(A)(z)\mid\pi_1(B)(z))}{\Ker(\pi_1(A)(z))}
$$
\end{corollary}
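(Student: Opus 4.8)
The plan is to combine the two structural decompositions already established: the product-ring splitting of Theorem \ref{ProductoCuerpos} and the hereditary factorization of Theorem \ref{FactorHereditario}. The strategy is first to break $\Ker(A(z)\mid B(z))$ into a direct sum indexed by the factor fields, and then to factor each summand separately using that each factor polynomial ring $\mathbb{K}_i[z]$ is a principal ideal domain. Since the preceding corollary already carries out the first half of this reduction, the present statement is essentially obtained by feeding its output into the hereditary factorization theorem.

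First I would record the isomorphism $R[z]=(\mathbb{K}_1\times\cdots\times\mathbb{K}_s)[z]\cong\mathbb{K}_1[z]\times\cdots\times\mathbb{K}_s[z]$, exactly as in the previous corollary, so that $R[z]$ is again a finite product of rings with structural idempotents $e_i$ and projections $\pi_i:R[z]\to\mathbb{K}_i[z]$. Since each $\mathbb{K}_i[z]$ is a direct summand of the free $R[z]$-module $R[z]^1$, it is projective and hence a flat $R[z]$-algebra. Applying Theorem \ref{ProductoCuerpos} to the pair of $R[z]$-linear maps determined by $A(z)$ and $B(z)$ then yields
$$
\Ker(A(z)\mid B(z))\cong\bigoplus_{i=1}^{s}\Ker\bigl(\pi_i^{\ast}(A(z))\mid\pi_i^{\ast}(B(z))\bigr),
$$
where $\pi_i^{\ast}$ reduces each polynomial matrix coefficientwise, so that $\pi_i^{\ast}(A(z))=\pi_i(A)(z)$ and likewise for $B(z)$.

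Next I would treat each summand over $\mathbb{K}_i[z]$. Because $\mathbb{K}_i$ is a field, $\mathbb{K}_i[z]$ is a principal ideal domain, hence a Dedekind domain, hence a hereditary ring; consequently every projective $\mathbb{K}_i[z]$-module is hereditary, and in particular the free modules $\mathbb{K}_i[z]^{q_1}$, $\mathbb{K}_i[z]^{q_2}$ and $\mathbb{K}_i[z]^{p}$ carrying the maps $\pi_i(A)(z)$ and $\pi_i(B)(z)$ are hereditary. Theorem \ref{FactorHereditario} therefore applies to each factor and gives
$$
\Ker\bigl(\pi_i(A)(z)\mid\pi_i(B)(z)\bigr)\cong\frac{\Ker\bigl(\pi_i(A)(z),\pi_i(B)(z)\bigr)}{\Ker\bigl(\pi_i(A)(z)\bigr)}.
$$
Substituting these factorizations into the direct sum from the previous step produces the asserted formula.

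The bulk of the argument is formal, so the main point requiring care is not a computation but the verification that the hypotheses of the two invoked theorems genuinely transfer. Concretely, one must check that base change along $\pi_i$ carries the free $R[z]$-modules to free $\mathbb{K}_i[z]$-modules, so that the hereditary hypothesis of Theorem \ref{FactorHereditario} is met on each factor, and that $\pi_i^{\ast}$ applied to a polynomial matrix is indeed coefficientwise reduction; both are routine but are precisely what make the passage from the ring-level isomorphism $R[z]\cong\prod\mathbb{K}_i[z]$ to the module-level factorization legitimate. No genuine obstacle arises beyond this bookkeeping, since flatness of $\mathbb{K}_i[z]$ over $R[z]$ secures the direct-sum splitting while heredity of $\mathbb{K}_i[z]$ secures the quotient factorization.
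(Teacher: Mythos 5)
Your argument is correct and matches the paper's intended proof: the paper likewise first decomposes $\Ker(A(z)\mid B(z))$ via $R[z]\cong\mathbb{K}_1[z]\times\cdots\times\mathbb{K}_s[z]$ and the preceding product-ring corollary, and then applies Theorem \ref{FactorHereditario} to each factor after observing that $\mathbb{K}_i[z]$ is a principal ideal domain, hence hereditary. Note that your version, with $\Ker\bigl(\pi_i(A)(z),\pi_i(B)(z)\bigr)$ (a comma) in each numerator and the index running over $i=1,\dots,s$, is the correct reading of the conclusion; the displayed formula in the statement contains typographical slips (a bar in place of the comma and the index $1$ repeated in every summand).
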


Notice that the key point in above results is some kind of "commutativity" between the product of rings and extension from a ring to its ring of polynomials; that is to say,$R[z]=(R_1\times\cdots\times R_s)[z]\cong R_1[z]\times\cdots\times R_s[z]$, which assures that $R_i[z]$ is a flat $R[z]$-algebra. This phenomenon is also true for the extensions to formal power series $R\mapsto R[[z]]$; polynomial ring $R[z,z^{-1}]$; formal power series $R[[z,z^{-1}]]$; Puisseux series; and, if $R_i$ are domains, for extensions to rational fractions $R\mapsto R(z)$ and Laurent series $R\mapsto R((z))$. Thus we conjecture that Theorem \ref{ProductoCuerpos} could be extended to these scenarios.

To conclude it is worth to note that factorization results \emph{\`a la} Corollary \ref{LocalGlobalKer} holds on by assuring that factor rings $R_i[z]$ (or respectively $R_i[[z]]$, ...) are hereditary.

\end{document}